\newcommand{\beq}{\begin{equation}}
\newcommand{\eeq}{\end{equation}}
\newcommand{\bea}{\begin{eqnarray}}
\newcommand{\eea}{\end{eqnarray}}
\newcommand{\beas}{\begin{eqnarray*}}
\newcommand{\eeas}{\end{eqnarray*}}
\newcommand{\G}{\mathbb{G}}
\newcommand{\Ga}{\Gamma}
\newtheorem{theorem}{Theorem}[section]
\newtheorem{proposition}[theorem]{Proposition}
\newtheorem{corollary}[theorem]{Corollary}
\newtheorem{lemma}[theorem]{Lemma}
\newtheorem{remark}[theorem]{Remark}
\newtheorem{example}[theorem]{Example}
\newtheorem{examples}[theorem]{Examples}
\newtheorem{foo}[theorem]{Remarks}
\newenvironment{proof}{\addvspace{\medskipamount}\par\noindent{\it
Proof}.}
{\unskip\nobreak\hfill$\Box$\par\addvspace{\medskipamount}}
\newcommand{\p}{\partial}
\newcommand{\bM}{\mathbb M}
\newcommand{\F}{\mathcal F}
\newcommand{\Ho}{\mathcal H}
\newcommand{\R}{\mathbb R}
\title{Reverse Poincar\'e inequalities, Isoperimetry, and Riesz transforms in Carnot groups}
\author{Fabrice Baudoin, Michel Bonnefont}
\begin{document}

\maketitle

\begin{abstract}
We prove an optimal reverse Poincar\'e inequality for the heat semigroup generated by the sub-Laplacian on a Carnot group of any step.  As an application we give new proofs of the isoperimetric inequality and of the boundedness of the Riesz transform in Carnot groups.
\end{abstract}

\tableofcontents

\section{Introduction}

In this paper, we are interested in reverse Poincar\'e inequalities for Markov semigroups. If $P_t$ is a Markov semigroup generated by a diffusion operator $L$, such an inequality writes in the form of
\begin{align}\label{rp}
\Gamma(P_t f) \le C(t) ( P_t f^2-(P_tf)^2)
\end{align}
where $C(t)$ is a positive constant and  $\Gamma$ the \textit{carr\'e du champ} operator: $$\Gamma(f)=\frac{1}{2}(Lf^2 -2fLf).$$ This inequality is a regularization estimate in the sense that it allows to control derivatives of $P_t f$ in terms of the sup-norm of $f$ only. If $L$ is the Laplace operator on $\mathbb{R}^n$, it is easy to check that the inequality \eqref{rp} holds with $C(t)=\frac{1}{2t}$, and this is the best possible constant.  

\

The reverse Poincar\'e inequality is closely related to intrinsic curvature properties of the generator $L$. Actually, it turns out that the inequality 
\[
\Gamma(P_t f) \le \frac{K}{e^{2Kt}-1} ( P_t f^2-(P_tf)^2)
\]
is equivalent to the Bakry-\'Emery criterion
\[
\Gamma_2(f) \ge K \Gamma(f),
\]
where
\[
\Gamma_2(f)=\frac{1}{2} (L\Gamma(f) -2\Gamma(f,Lf)),
\]
is the usual Bakry's $\Gamma_2$ operator. For further details on this, we refer to the book by Bakry-Gentil-Ledoux \cite{BGL}. As an example, on a complete Riemannian manifold, the inequality 
\[
\| \nabla P_t f \|^2 \le \frac{K}{e^{2Kt}-1} ( P_t f^2-(P_tf)^2)
\]
is satisfied for the semigroup generated by the Laplace-Beltrami operator, if and only if the Ricci curvature of the manifold is bounded from below by $K$. 

\

As is it now understood, the Bakry-\'Emery criterion requires some form of ellipticity for the generator and typically fails to hold for strictly hypoelliptic operators \cite{BBBC,BB}. However. if $L$ is a hypoelliptic diffusion operator, the semigroup $P_t$ is smoothing in the sense that it transforms bounded Borel functions into smooth functions. For this reason, it is natural to expect, that under suitable conditions an inequality like \eqref{rp} should hold. In \cite{BBBC}, it was proved that if $L$ is the sub-Laplacian in the Heisenberg group, the following reverse Poincar\'e inequality holds
\[
\Gamma(P_t f) \le \frac{1}{t} ( P_t f^2-(P_tf)^2).
\]
Moreover, the constant $\frac{1}{t}$ is optimal. In the same reference, it was conjectured that a similar inequality should hold on any Carnot group. In the present paper, we prove that this is indeed the case and moreover compute the optimal constant of the inequality.

\

The paper is organized as follows. In Section 2, we first recall some basic results about Carnot groups and their heat semigroups and then proceed to work out the optimal reverse Poincar\'e inequality. In Section 3, we show how the reverse Poincar\'e inequality implies the isoperimetric inequality. Finally in Section 4, we give a second application of the reverse Poincar\'e inquality, by showing that the Riesz transform on Carnot groups is bounded.

\section{The optimal reverse Poincar\'e inequality for the heat semigroup in Carnot groups}

\subsection{Preliminaries on Carnot groups}

A Carnot group of step (or depth) $N$ is a simply connected Lie
group $\mathbb{G}$ whose Lie algebra can be written
\[
\mathfrak{g}=\mathcal{V}_{1}\oplus...\oplus \mathcal{V}_{N},
\]
where
\[
\lbrack \mathcal{V}_{i},\mathcal{V}_{j}]=\mathcal{V}_{i+j}
\]
and
\[
\mathcal{V}_{s}=0,\text{ for }s>N.
\]
From the above properties, it is of course seen that Carnot groups are nilpotent.
The number
\[
Q=\sum_{i=1}^N i \dim \mathcal{V}_{i}
\]
is called the homogeneous dimension of $\mathbb{G}$.

\begin{example}

\

\begin{enumerate}
\item (Commutative case) The group $\left( \mathbb{R}^d ,+ \right)$ is the only commutative
Carnot group.

\item (Heisenberg groups) Consider the set $\mathbb{H}_n =\mathbb{R}^{2n} \times \mathbb{R}$
endowed with the group law
\[
(x,\alpha) \star (y, \beta)=\left( x+y, \alpha + \beta
+\frac{1}{2} \omega (x,y) \right),
\]
where $\omega$ is the standard symplectic form on
$\mathbb{R}^{2n}$, that is
\[
\omega(x,y)= x^t \left(
\begin{array}{ll}
0 & -\mathbf{I}_{n} \\
\mathbf{I}_{n} & ~~~0
\end{array}
\right) y.
\]
On
$\mathfrak{h}_n$ the Lie bracket is given by
\[
[ (x,\alpha) , (y, \beta) ]=\left( 0, \omega (x,y) \right),
\]
and it is easily seen that
\[
\mathfrak{h}_n=\mathcal{V}_1 \oplus \mathcal{V}_2,
\]
where $\mathcal{V}_1 =\mathbb{R}^{2n} \times \{ 0 \}$ and
$\mathcal{V}_2= \{ 0 \} \times \mathbb{R}$. Therefore
$\mathbb{H}_n$ is a Carnot group of depth 2.

\item (Engel group) The Engel group is the $4$-dimensional Lie group of matrices
\[
\mathbb{E}= \left\{ 
\left(
\begin{array}{llll}
1 & x & \frac{x^2}{2} & z \\
0 & 1 & x & w \\
0 & 0 & 1 & y \\
0 & 0 & 0 & 1
\end{array}
\right), x,y,w, z \in \mathbb{R} \right\}.
\]
Its Lie algebra $\mathfrak{e}$ is generated by the matrices
\[
X= \left(
\begin{array}{llll}
0 & 1 & 0 & 0 \\
0 & 0 & 1 & 0 \\
0 & 0 & 0 & 0 \\
0 & 0 & 0 & 0
\end{array}
\right),
Y= \left(
\begin{array}{llll}
0 & 0 & 0 & 0 \\
0 & 0 & 0 & 0 \\
0 & 0 & 0 & 1 \\
0 & 0 & 0 & 0
\end{array}
\right)
\]
\[
W= \left(
\begin{array}{llll}
0 & 0 & 0 & 0 \\
0 & 0 & 0 & 1 \\
0 & 0 & 0 & 0 \\
0 & 0 & 0 & 0
\end{array}
\right),
Z= \left(
\begin{array}{llll}
0 & 0 & 0 & 1 \\
0 & 0 & 0 & 0 \\
0 & 0 & 0 & 0 \\
0 & 0 & 0 & 0
\end{array}
\right),
\]
for which we have the following structure relations,
\[
[X,Y]=W, \quad [X,W]=Z
\]
and all other brackets are zero. The Engel group is easily seen to be a Carnot group of step 3.
\end{enumerate}
\end{example}
On $\mathfrak{g}$ we can
consider the family of linear operators $\delta_{t}:\mathfrak{g}
\rightarrow \mathfrak{g}$, $t \geq 0$ which act by scalar
multiplication $t^{i}$ on $\mathcal{V}_{i} $. These operators are
Lie algebra automorphisms due to the grading. The maps $\delta_t$
induce Lie group automorphisms $\Delta_t :\mathbb{G} \rightarrow
\mathbb{G}$ which are called the canonical dilations of
$\mathbb{G}$. It is easily seen that there exists on $\mathbb{G}$ a complete and smooth vector field $D$ such that
\[
\Delta_t =e^{(\ln t) D}.
\]
This vector field $D$ is called the dilation vector field on $\mathbb{G}$. If $X$ is a left (or right) invariant smooth horizontal vector field on $\mathbb{G}$, that is $X(0) \in \mathcal{V}_{1}$, we have for every $f \in C^\infty(\mathbb{G})$, and $t \ge 0$,
\[
X(f \circ \Delta_t)=t Xf.
\]

\

Let us now pick a basis $V_1,...,V_d$ of the vector
space $\mathcal{V}_1$. The vectors $V_i$'s can be seen as left
invariant vector fields on $\mathbb{G}$. In the sequel, these vector fields shall still be denoted by $V_1,...,V_d$ and the corresponding right invariant vector fields shall be denoted by $\hat{V}_1,...,\hat{V}_d$. The left invariant sub-Laplacian on $\mathbb{G}$ is the operator:
\[
L=\sum_{i=1}^d V_i^2.
\]
It is essentially self-adjoint on the space of smooth and compactly supported function $f: \mathbb{G} \rightarrow \mathbb{R}$ with the respect to the Haar measure $\mu$ of $\mathbb{G}$. The heat semigroup $(P_t)_{t\ge 0}$ on $\mathbb{G}$, defined through the spectral theorem, is then seen to be a Markov semigroup (see \cite{varopoulos}). This heat semigroup admits a positive fundamental solution named the heat kernel and  denoted by $p_t(g,g')$. We often simply denote $p_t(g)=p_t(0,g)$ for the heat kernel issued from the identity. By left invariance, it is enough to know this heat kernel issued from the identity to recover all the heat kernels. Since Carnot groups enjoy the volume doubling property and carry the Poincar\'e inequality on balls, we deduce that $p_t$ satisfies the double-sided Gaussian bounds (see Theorem 2.9 in \cite{varopoulos}):

\begin{align}\label{bound1}
\frac{C^{-1}}{t^{Q/2}} \exp
\left(-\frac{ C_1d(0,g)^2}{t}\right)\le p_t(g)\le \frac{C}{t^{Q/2}} \exp
\left(-C_2\frac{d(0,g)^2}{t}\right),
\end{align}
for some constants $C,C_1,C_2 >0$. Here $d(0,g)$ is Carnot-Carath\'eodory distance from $0$ to $g$ in $\G$.

As usual, if $f: \mathbb{G} \rightarrow \mathbb{R}$ is a smooth function, we denote, 
\[
\Gamma (f,f)=\sum_{i=1}^d (V_i f)^2.
\]
This is the \textit{carr\'e du champ} operator of $L$. Sometimes, as a shortened notation we will denote $\Gamma(f)$ for $\Gamma(f,f)$. The following gradient bound can be found in \cite{varopoulos}, Theorem 2.7,
\begin{align}\label{bound2}
\sqrt{\Gamma(p_t)} (g) \le \frac{C}{t^{\frac{Q+1}{2}}} \exp
\left(-C_3\frac{d(0,g)^2}{t}\right).
\end{align}

We also introduce the right-invariant sub-Laplacian:
\[
\hat L=\sum_{i=1}^d \hat V_i^2
\]
and we denote by $\hat P_t $ the associated heat semigroup and by $\hat p_t$ the associated heat kernel.
We also denote
\[
\hat \Gamma (f,f)=\sum_{i=1}^d (\hat V_i f)^2.
\]

First, we begin with two useful lemmas, whose proofs are classical and let to the reader. The main argument is that for $f  \in L^2 (\mathbb{G},\mu)$, $P_t f$ is the unique solution in $L^2(\mathbb{G},\mu)$ of the parabolic Cauchy problem:
\begin{align*}
\begin{cases}
\frac{\partial \phi}{\partial t}=L \phi \\
\phi (0,x)=f(x).
\end{cases}
\end{align*}
The same characterization holds for $\hat P_t $.  Our first lemma relies the two semigroups $P_t$ and $\hat P_t$. 
\begin{lemma}\label{hat-P_t}
Let $f \in L^2 (\mathbb{G},\mu)$. Then for $g \in \mathbb{G}$, one has:
$$
\hat P_t (f) (g)= P_t \left(f \circ Ad(g^{-1})\right) (g)
$$
where $Ad(g^{-1})$ is the function defined by $Ad(g^{-1}) (h)= g^{-1} h  g$.
As a consequence, one has:
$$
\hat P_t(f)(0)=P_t(f)(0),
$$
and for every $g\in \mathbb{G}$
$$
\hat  p_t (g) = p_t (g).
$$
\end{lemma}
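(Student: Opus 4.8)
The statement relates the left- and right-invariant heat semigroups via the adjoint action. The plan is to use the uniqueness characterization of $P_t f$ and $\hat P_t f$ as solutions of the respective parabolic Cauchy problems, as flagged in the paragraph preceding the lemma. First I would fix $g \in \mathbb{G}$ and define $\phi(t,h) = P_t(f \circ Ad(g^{-1}))(h)$, then consider the function $\psi(t,h) = \phi(t, h)$ evaluated along a suitable transport; more precisely, I would look at $u(t,h) := \phi\bigl(t, \cdot\bigr)$ composed back through $Ad(g)$, i.e.\ set $u(t,h) = \bigl(P_t(f \circ Ad(g^{-1}))\bigr)(g^{-1} h g)$ is not quite the right move — instead the cleanest route is to show directly that $h \mapsto \bigl(P_t(f\circ Ad(g^{-1}))\bigr)(h)$, when we only care about its value at the single point $h = g$, coincides with $\hat P_t f(g)$.

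The key computational input is the relation between $L$ and $\hat L$ under conjugation. Since $\hat V_i$ is the right-invariant vector field agreeing with $V_i$ at the identity, one has the standard fact that for a fixed $g$, $\hat V_i f(g)$ equals the derivative of $f$ along the flow $s \mapsto \exp(sV_i) g$, whereas $V_i(f \circ Ad(g^{-1}))$ relates to differentiation along $g\exp(sV_i)$ pushed through the automorphism $Ad(g^{-1})$. Because $Ad(g^{-1})$ is a Lie algebra automorphism preserving the stratification (it maps $\mathcal{V}_1$ to $\mathcal{V}_1$), the operator $L$ is \emph{not} invariant under $Ad(g^{-1})$ in general, but the point is more subtle: one checks that $\hat L (f) = $ the conjugate of $L$ by the map $h \mapsto g^{-1}hg$ \emph{when evaluated appropriately}, so that if $\phi$ solves $\partial_t \phi = L\phi$ with datum $f \circ Ad(g^{-1})$, then $t \mapsto \phi(t,g)$ produces a solution of the right-invariant heat equation with datum $f$. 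By uniqueness in $L^2(\mathbb{G},\mu)$ (using that the Haar measure is bi-invariant on a nilpotent, hence unimodular, group, so $f \circ Ad(g^{-1}) \in L^2$ with the same norm), we conclude $\hat P_t f(g) = P_t(f\circ Ad(g^{-1}))(g)$.

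From the main identity the two consequences are immediate: taking $g = 0$ (the identity) gives $Ad(0^{-1}) = \mathrm{id}$, hence $\hat P_t f(0) = P_t f(0)$. For the heat kernels, apply this to approximations of the Dirac mass at $0$, or simply note that $\hat p_t(g) = \hat p_t(0,g)$ is the kernel of $\hat P_t$ and use self-adjointness together with $\hat P_t f(0) = P_t f(0)$ for all $f$: writing both sides as integrals against kernels issued from $0$ forces $\hat p_t(0,\cdot) = p_t(0,\cdot)$ as functions, i.e.\ $\hat p_t(g) = p_t(g)$.

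**Main obstacle.** The delicate point is the conjugation identity for the operators: one must verify that pre-composition with the inner automorphism $h \mapsto g^{-1}hg$ intertwines the left-invariant sub-Laplacian with the right-invariant one in exactly the way needed, and that this is compatible with the $L^2$ uniqueness statement (so that one is genuinely allowed to invoke the Cauchy-problem characterization rather than just a formal PDE manipulation). Handling the interchange of the automorphism with the heat flow rigorously — rather than the bookkeeping of vector fields, which is routine — is where care is required; the unimodularity of $\mathbb{G}$ (automatic for nilpotent groups) is what makes the $L^2$ framework go through cleanly.
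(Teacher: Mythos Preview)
Your overall strategy --- invoke the $L^2$ uniqueness of the parabolic Cauchy problem, exactly as the paragraph preceding the lemma suggests --- is the intended one, and your derivation of the two consequences from the main identity is correct.

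The gap is in the intertwining step. You write that ``$\hat L(f)$ equals the conjugate of $L$ by the map $h\mapsto g^{-1}hg$ when evaluated appropriately.'' This is not true in any useful sense: an inner automorphism $c_{g}(h)=ghg^{-1}$ is a group automorphism, hence its pushforward sends left-invariant vector fields to left-invariant vector fields (with value $Ad(g)V_i(0)$ at the identity), not to right-invariant ones. So conjugation does not exchange $L$ and $\hat L$; the operator that does is the inversion $\iota(h)=h^{-1}$, for which $V_i(F\circ\iota)=-(\hat V_i F)\circ\iota$ and hence $L(F\circ\iota)=(\hat L F)\circ\iota$.

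A clean way to carry out your plan is to first use the left-invariance of $P_t$ to simplify the right-hand side: since $(f\circ Ad(g^{-1}))(gh)=f(hg)=(f\circ R_g)(h)$, one has
\[
P_t\bigl(f\circ Ad(g^{-1})\bigr)(g)=P_t(f\circ R_g)(0)=\int_\G f(hg)\,p_t(h)\,d\mu(h)=:\Psi(t,g).
\]
Now the $g$-dependence sits only in $R_g$, and the required computation becomes transparent: applying $\hat V_i$ in the variable $g$ to $f(hg)$ differentiates along $s\mapsto f(h\exp(sV_i)g)$, which is exactly $V_i$ in the variable $h$ applied to $f(\cdot\,g)$. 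Iterating, $\hat L_g\Psi(t,g)=P_t\bigl(L(f\circ R_g)\bigr)(0)=LP_t(f\circ R_g)(0)=\partial_t\Psi(t,g)$, and $\Psi(0,g)=f(g)$. Uniqueness for the $\hat L$-Cauchy problem (using, as you note, that $\G$ is unimodular so $f\circ R_g\in L^2$) then gives $\Psi(t,g)=\hat P_t f(g)$. This is the ``classical'' argument the paper alludes to; your plan had the right skeleton but misidentified the algebraic mechanism linking the two Laplacians.
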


The second lemma illustrates the scaling property of the semigroup $P_t$  with respect to the dilations. The second identity of the lemma can be obtained by differentiating the first one at $c=1$.

\begin{lemma}
Let $f \in  L^2 (\mathbb{G},\mu)$. For every $t,c \ge 0$,
\[
P_t( f\circ \Delta_{\sqrt{c} })=(P_{ct} f ) \circ \Delta_{\sqrt{c}} .
\]
Moreover, if $f \in C_0^\infty(\G)$, then for every $t \ge 0$,
\[
P_t Df=DP_tf + t P_t Lf.
\]
\end{lemma}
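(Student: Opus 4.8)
The plan is to obtain the first identity from the uniqueness of solutions of the $L^2$ parabolic Cauchy problem recalled just before Lemma~\ref{hat-P_t}, and then to deduce the second by differentiating the first at $c=1$, exactly as the surrounding text suggests. Two elementary scaling facts are needed. Applying the relation $X(f\circ\Delta_t)=t\,(Xf)\circ\Delta_t$ twice to each generator $V_i$ and summing gives, for $g\in C^\infty(\G)$ and $c>0$,
\[
L(g\circ\Delta_{\sqrt c})=c\,(Lg)\circ\Delta_{\sqrt c}.
\]
Moreover $\delta_{\sqrt c}$ acts by $c^{i/2}$ on $\V_i$, so $\Delta_{\sqrt c}$ multiplies the Haar measure by $c^{Q/2}$ with $Q=\sum_i i\dim\V_i$; hence $g\mapsto g\circ\Delta_{\sqrt c}$ is a bounded operator on $L^2(\G,\mu)$, and in particular $f\circ\Delta_{\sqrt c}\in L^2(\G,\mu)$ whenever $f$ is.

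For the first identity, fix $c>0$ and $f\in L^2(\G,\mu)$, and put $\phi(t,\cdot):=(P_{ct}f)\circ\Delta_{\sqrt c}$. By the remark above $\phi(t,\cdot)\in L^2(\G,\mu)$ for every $t\ge 0$, and $\phi(t,\cdot)\to f\circ\Delta_{\sqrt c}$ in $L^2$ as $t\to 0^+$ since $P_{ct}f\to f$ in $L^2$ and composition with $\Delta_{\sqrt c}$ is $L^2$-continuous. For $t>0$ the function $P_{ct}f$ is smooth by hypoellipticity, so the heat equation for $(P_s)$ together with the scaling law above gives
\[
\partial_t\phi(t,\cdot)=c\,(LP_{ct}f)\circ\Delta_{\sqrt c}=L\bigl((P_{ct}f)\circ\Delta_{\sqrt c}\bigr)=L\phi(t,\cdot).
\]
Thus $\phi$ solves the same $L^2$ Cauchy problem as $P_t(f\circ\Delta_{\sqrt c})$, and uniqueness yields $P_t(f\circ\Delta_{\sqrt c})=(P_{ct}f)\circ\Delta_{\sqrt c}$. (Alternatively one could change variables in $P_t(f\circ\Delta_{\sqrt c})(x)=\int_\G p_t(x^{-1}y)\,f(\Delta_{\sqrt c}y)\,d\mu(y)$ using the dilation-covariance of the heat kernel, but the Cauchy-problem route is shorter.)

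For the second identity, take $f\in C_0^\infty(\G)$, so that $c\mapsto f\circ\Delta_{\sqrt c}$ is a smooth curve in $C_0^\infty(\G)\subset L^2(\G,\mu)$; since $P_t$ is bounded on $L^2$, I may differentiate the first identity in $c$ at $c=1$, passing $\tfrac{d}{dc}$ through $P_t$ on the left. Using $\Delta_t=e^{(\ln t)D}$, differentiating a composition $h\circ\Delta_{\sqrt c}$ at $c=1$ brings out the dilation vector field $D$ acting on $h$. On the right-hand side the chain rule splits the $c$-dependence into the part coming from the time parameter $ct$ — which, via $\partial_s P_s=LP_s=P_sL$, produces the $P_tLf$ term — and the part coming from the dilation $\Delta_{\sqrt c}$ — which produces the $DP_tf$ term — while on the left one gets $P_tDf$; collecting these contributions gives the stated identity.

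The argument is essentially routine, and the only points requiring a little care are the application of the pointwise scaling law for $L$ to the smooth but non-compactly-supported function $P_{ct}f$ — legitimate, since that identity is purely local — and the interchange of $\tfrac{d}{dc}$ with $P_t$ in the last step, which is the main (though mild) technical obstacle; it is dispatched by working with $f\in C_0^\infty(\G)$, where the curve $c\mapsto f\circ\Delta_{\sqrt c}$ and its $c$-derivative stay in $L^2$ with locally uniform control. The first identity then extends to all of $L^2(\G,\mu)$ by density, using the $L^2$-boundedness of $g\mapsto g\circ\Delta_{\sqrt c}$ and of $P_t$.
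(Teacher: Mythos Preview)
Your argument follows exactly the route the paper indicates (and leaves to the reader): the first identity via uniqueness for the $L^2$ parabolic Cauchy problem, using the scaling relation $L(g\circ\Delta_{\sqrt c})=c\,(Lg)\circ\Delta_{\sqrt c}$, and the second by differentiating the first at $c=1$. One small correction: if you actually carry out that differentiation, the factor $\tfrac12$ coming from $\tfrac{d}{dc}\big|_{c=1}(h\circ\Delta_{\sqrt c})=\tfrac12 Dh$ appears on both sides and cancels against the $D$-terms but \emph{not} against the time derivative, so what you obtain is $P_tDf=DP_tf+2t\,P_tLf$; the coefficient $t$ in the displayed statement is a misprint for $2t$ (as one can check directly on $\R^n$, or by verifying that the correct heat-kernel identity is $(2tL+D+Q)p_t=0$).
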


We conclude this preliminary section with an integrability lemma which shall be useful in the sequel.

\begin{lemma}\label{integrability}
Let $f: \G \to \R$ be a smooth function with polynomial growth, that is,
\[
| f(g) | \le C (1+ d(0,g))^N, \quad g \in \G,
\]
for some $C >0, N\ge 0$, then for  $t >0$,
\[
\int_\G f p_t \Gamma (\ln p_t, \ln p_t) d\mu <+\infty.
\]
\end{lemma}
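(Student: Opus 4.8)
The plan is to reduce the statement to a pointwise Gaussian bound on the integrand and then to use the polynomial volume growth of metric balls in $\G$.

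Since $p_t$ is smooth and strictly positive on $\G$ (positivity being part of \eqref{bound1}), $\ln p_t$ is a well-defined smooth function and the chain rule for the derivation $\Gamma$ gives $p_t\,\Gamma(\ln p_t,\ln p_t)=\Gamma(p_t)/p_t$. It therefore suffices to show $\int_\G |f|\,\Gamma(p_t)/p_t\,d\mu<+\infty$. Squaring the gradient bound \eqref{bound2} and dividing by the lower heat kernel bound in \eqref{bound1} yields, for every $g\in\G$,
\[
\frac{\Gamma(p_t)(g)}{p_t(g)}\ \le\ \frac{C^{3}}{t^{(Q+2)/2}}\,\exp\!\Big(-\frac{(2C_3-C_1)\,d(0,g)^2}{t}\Big).
\]
If the Gaussian constants in \eqref{bound1}--\eqref{bound2} are chosen so that $\kappa:=2C_3-C_1>0$, then using the polynomial growth of $f$ together with $\mu(B(0,r))=c_0 r^{Q}$ (a consequence of the dilation structure, and in particular of volume doubling), a dyadic decomposition of $\G$ into the shells $\{2^{k}\le 1+d(0,\cdot)<2^{k+1}\}$ gives
\[
\int_\G |f|\,\frac{\Gamma(p_t)}{p_t}\,d\mu\ \le\ \frac{C^{4}c_0}{t^{(Q+2)/2}}\sum_{k\ge 0}2^{(k+1)(N+Q)}\,e^{-\kappa(2^{k}-1)^{2}/t}\ <\ +\infty,
\]
the series converging since a Gaussian in $4^{k}$ eventually dominates a geometric term. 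The bound above is harmless near the origin, so no separate treatment of a neighbourhood of $0$ is needed.

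The one delicate point is thus the inequality $2C_3>C_1$, which is not automatic for arbitrary admissible constants. It can be secured in any of the following (classical) ways. First, one may take the constants near their optimal value $1/4$: by the scaling identity $p_t(g)=t^{-Q/2}p_1(\Delta_{1/\sqrt t}g)$ and the Varadhan-type asymptotics $-\log p_1\sim d(0,\cdot)^2/4$, the rate $C_1$ may be taken arbitrarily close to $1/4$, as may the Gaussian rates in the upper half of \eqref{bound1} and in \eqref{bound2}, whence $2C_3-C_1\to 1/4>0$. Second, one may upgrade \eqref{bound2} directly: writing $p_t=P_{\varepsilon t}p_{(1-\varepsilon)t}$ and splitting the convolution at the ball $B\big(0,(1-\delta)d(0,g)\big)$, one estimates $\sqrt{\Gamma(p_t)}(g)$ on that ball by \eqref{bound2} at time $\varepsilon t$ (whose Gaussian rate $\sim C_3\delta^{2}/\varepsilon$ is arbitrarily large) and, off that ball, by $\sqrt{\Gamma(P_{\varepsilon t}h)}\le C(\varepsilon t)^{-1/2}\|h\|_{\infty}$ (a consequence of \eqref{bound2}) applied to $\mathbf 1_{B^{c}}p_{(1-\varepsilon)t}$; letting $\varepsilon,\delta\to0$ shows that $\sqrt{\Gamma(p_t)}$ decays at essentially the upper-bound rate $C_2$, and $2C_2>C_1$ holds since both are comparable to $1/4$. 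Third, and most economically, one may quote the Hamilton-type gradient estimate $\Gamma(\ln p_t)\le \tfrac{C}{t}\big(1+d(0,\cdot)^2/t\big)$ (a consequence of the parabolic Harnack inequality and of the lower bound in \eqref{bound1}), after which $p_t\,\Gamma(\ln p_t)\le \tfrac{C}{t^{(Q+2)/2}}\big(1+d(0,\cdot)^2/t\big)e^{-C_2 d(0,\cdot)^2/t}$ and integrability against any polynomially growing $f$ is immediate. The substance of the proof lies in this constant-matching; the concluding Gaussian-versus-polynomial estimate is routine.
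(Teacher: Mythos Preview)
Your strategy differs from the paper's and runs into a genuine gap that none of your three fixes resolves at the level of rigour required here. The crux, as you correctly isolate, is the inequality $2C_3>C_1$ between the Gaussian rate in the gradient bound \eqref{bound2} and the rate in the \emph{lower} bound of \eqref{bound1}. But: (i) Varadhan asymptotics control the exponential rate of $p_1$ at infinity and hence allow $C_1$ close to $1/4$, yet they say nothing directly about the rate $C_3$ in the gradient bound, so your first fix is incomplete; (ii) your bootstrapping argument is too sketchy, and its punchline ``$2C_2>C_1$ since both are comparable to $1/4$'' again presupposes sharp-constant information that has not been established; (iii) a Hamilton-type estimate $\Gamma(\ln p_t)\le Ct^{-1}(1+d^2/t)$ would indeed finish the proof, but it is \emph{not} a routine consequence of parabolic Harnack in the sub-elliptic setting, and it is neither proved nor cited in the paper. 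So as written the argument does not close.

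The paper avoids constant matching altogether by an integration-by-parts trick that never divides by $p_t$. From the scaling relation $P_tDf=DP_tf+tP_tLf$ one first derives the identity $(tL+D+\tfrac{Q}{2})p_t=0$. For compactly supported $h$ one then computes
\[
\int_\G h f\, p_t\,\Gamma(\ln p_t)\,d\mu
= -\frac{1}{t}\int_\G D(hf\ln p_t)\,p_t\,d\mu - \int_\G \ln p_t\,\Gamma(hf,p_t)\,d\mu,
\]
so that after expanding, every term on the right involves only $p_t$, $\ln p_t$, $\sqrt{\Gamma(p_t)}$, $Dp_t$ and derivatives of $hf$ (with polynomial growth), all multiplied by a Gaussian-decaying factor coming from the \emph{upper} bounds \eqref{bound1}--\eqref{bound2}. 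One then takes $h=h_n\uparrow 1$ with $|Dh_n|+\sqrt{\Gamma(h_n)}\le n^{-1}P$ for a fixed polynomial $P$, and the right-hand side converges by dominated convergence while the left-hand side converges by monotonicity. The point is that after the integration by parts the lower heat-kernel bound is never used, so no relation between $C_1$ and $C_3$ is needed.
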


\begin{proof}
 As the dilation vector field $D$ vanishes at 0,
  for all $t>0$ and  for any smooth and compactly supported $h$,
  $$
  P_{t}((tL-D)h)(0)=0,
  $$ 
  that is 
  $$\int_\G (tL-D)h \; p_t\,  d\mu=0.
  $$
  This implies
  $$
  \left(tL+D+\frac{Q}{2}\right)p_t=0
  $$
  because the the adjoint $D^*$ of $D$ is  $- D-\frac{Q}{2}$.
  
  \
  
  Let now $h \in C_0^\infty(\G)$. We have
  \begin{align*}
  \int_\G h f p_t \Gamma (\ln p_t, \ln p_t) d\mu &=  \int_\G h f  \Gamma (\ln p_t,  p_t) d\mu \\
   &= \int_\G  \Gamma ( hf \ln p_t,  p_t) d\mu- \int_\G \ln p_t  \Gamma (h f,  p_t) d\mu \\
   &=-\int_\G  hf \ln p_t Lp_t  d\mu- \int_\G \ln p_t  \Gamma (h f,  p_t) d\mu \\
   &=-\frac{1}{t} \int_\G D( hf \ln p_t ) p_t  d\mu- \int_\G \ln p_t  \Gamma (h f,  p_t) d\mu
  \end{align*}
  
  In exponential coordinates, the vector fields $D$ and $V_i$'s have polynomial coefficients. We can thus easily construct an increasing  sequence $h_n \in C_0^\infty(\G)$ such that $0 \le h_n \le 1$, $h_n \to 1$ and $| D h_n | (g) \le \frac{1}{n} P(g) $,  $\sqrt{\Gamma}(h_n) (g) \le \frac{1}{n} P(g)$, where $P$ is a function with polynomial growth on $\G$. We now use the previous equalities with $h_n$ in place of $h$. We have
  \[
   \int_\G D( h_n f \ln p_t ) p_t  d\mu= \int_\G h_n D( f \ln p_t ) p_t  d\mu+ \int_\G D( h_n) f p_t \ln p_t d\mu.
  \]
  Thus, from the bounds \eqref{bound1} and \eqref{bound2}, by dominated convergence, we obtain
  \[
   \int_\G D( h_n f \ln p_t ) p_t  d\mu \to \int_\G  D( f \ln p_t ) p_t  d\mu.
  \]
  Similarly, we have
  \[
  \int_\G \ln p_t  \Gamma (h_n f,  p_t) d\mu \to   \int_\G \ln p_t  \Gamma (f,  p_t) d\mu.
  \]
\end{proof}

\subsection{The optimal reverse Poincar\'e inequality}

We now turn to the inequality which is of interest for us. The following reverse Poincar\'e inequality for the heat semigroup is optimal.

\begin{proposition}\label{P-I-gen}
Let $f: \mathbb{G} \rightarrow \mathbb{R}$ be a smooth and compactly supported function. For $g \in \mathbb{G}$,
\[
\Gamma (P_t f,P_t f)(g) \le \frac{\Lambda}{t} \left( P_t f^2 (g)-(P_t f)^2 (g) \right), \quad t>0.
\]
where the constant $\Lambda$ is   the largest eigenvalue of the symmetric matrix 
$$
M=\left(  \int_\G \hat V_i (\ln p_1) \hat V_j(\ln p_1) p_1 d\mu \right)_{1\leq i,j \leq d}.
$$
Moreover, the constant $\Lambda$ is sharp. 
\end{proposition}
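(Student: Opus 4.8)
The natural strategy is the classical Bakry–Émery interpolation along the semigroup, but carried out \emph{without} any curvature lower bound — instead exploiting the scaling structure of the Carnot group to pin down the optimal constant. Fix $g$ and $t>0$, set $u(s) = P_s\!\left( (P_{t-s}f)^2 \right)(g)$ for $s\in[0,t]$, so that $u(0) = (P_t f)^2(g)$ and $u(t) = P_t f^2(g)$. A direct computation gives $u'(s) = 2\,P_s\!\left(\Gamma(P_{t-s}f)\right)(g)$, so that
\[
P_t f^2(g) - (P_t f)^2(g) \;=\; 2\int_0^t P_s\!\left(\Gamma(P_{t-s}f)\right)(g)\,ds.
\]
The point is therefore to bound $\Gamma(P_t f)(g)$ from above by $P_s\!\left(\Gamma(P_{t-s}f)\right)(g)$ uniformly in $s$, up to the right constant; integrating then produces the factor $t$. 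Since $P_t f$ is smooth, $\Gamma(P_t f)(g) = \sum_i (V_i P_t f)^2(g)$, and the quantity $V_i P_t f (g)$ should be represented as an integral against the heat kernel using left/right invariance: differentiating $P_t f(g) = \int_\G f(gh) p_t(h)\,d\mu(h)$ along the left-invariant field $V_i$ hits $f$, while differentiating the equivalent representation with the kernel on the other side converts $V_i$ acting at $g$ into $\hat V_i$ acting on $p_t$ — this is exactly why the matrix $M$ is built from the \emph{right}-invariant derivatives of $p_1$, and why Lemma~\ref{hat-P_t} (the identity $\hat p_t = p_t$) is invoked. Concretely one expects an identity of the shape
\[
V_i P_t f(g) \;=\; -\int_\G \big(P_t f(gh) - P_t f(g)\big)\,\hat V_i(\ln p_t)(h)\, p_t(h)\, d\mu(h),
\]
valid because $\int_\G \hat V_i p_t\, d\mu = 0$, so that a constant may be subtracted inside.

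From here the estimate is Cauchy–Schwarz with respect to the probability measure $p_t\,d\mu$: for any unit vector $\xi\in\R^d$,
\[
\left(\sum_i \xi_i V_i P_t f(g)\right)^{\!2} \le \left(\int_\G \big(P_t f(gh)-P_t f(g)\big)^2 p_t(h)\,d\mu(h)\right)\left(\int_\G \Big(\sum_i \xi_i \hat V_i \ln p_t\Big)^{\!2} p_t\,d\mu\right).
\]
The first factor on the right is precisely $P_t f^2(g) - (P_t f)^2(g)$. The second factor, after the dilation rescaling $p_t = p_1\circ\Delta_{1/\sqrt t}$ together with the homogeneity $\hat V_i(h\circ\Delta_{1/\sqrt t}) = \frac{1}{\sqrt t}(\hat V_i h)\circ\Delta_{1/\sqrt t}$ and the fact that $\Delta_{1/\sqrt t}$ pushes $p_t\,d\mu$ forward to $p_1\,d\mu$, equals $\frac{1}{t}\,\xi^{\mathsf T} M\,\xi$. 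Optimizing over $\xi$ and then summing the rank-one bounds (equivalently, diagonalizing $M$) yields $\Gamma(P_t f)(g)\le \frac{\Lambda}{t}\big(P_t f^2(g)-(P_t f)^2(g)\big)$ with $\Lambda = \lambda_{\max}(M)$. Lemma~\ref{integrability} is what guarantees all these integrals converge (the integrand $p_t\,\Gamma(\ln p_t)$ against polynomially growing data is finite), and the heat-kernel bounds \eqref{bound1}–\eqref{bound2} justify differentiating under the integral sign.

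The main obstacle is twofold. First, rigorously establishing the integral representation of $V_i P_t f(g)$ and the legitimacy of subtracting the constant $P_t f(g)$ — this requires care with the exchange of $V_i$ and $\int_\G$, handled via the Gaussian bounds and a cutoff argument exactly parallel to the one in the proof of Lemma~\ref{integrability}. Second, and more delicate, is the \emph{sharpness} of $\Lambda$: one must exhibit functions $f$ for which the Cauchy–Schwarz step is asymptotically an equality. The natural choice is to take $f$ approximating a linear functional in the direction of the top eigenvector of $M$ — e.g. $f$ close to $h\mapsto \sum_i \xi_i^{\ast} x_i(h)$ where $x_i$ are the first-layer exponential coordinates and $\xi^\ast$ is the eigenvector for $\Lambda$ — so that $P_t f(gh) - P_t f(g)$ becomes (after rescaling) proportional to $\sum_i \xi_i^\ast \hat V_i \ln p_t(h)$, forcing equality in Cauchy–Schwarz; one then evaluates both sides at $g=0$ and lets the approximation parameter go to the appropriate limit, using Lemma~\ref{integrability} to control the tails. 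I would expect the sharpness argument to occupy the bulk of the technical work, since it needs the precise asymptotic behavior of $P_t f$ on such test functions rather than mere bounds.
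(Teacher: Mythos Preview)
Your core argument --- the Cauchy-Schwarz step against the probability measure $p_t\,d\mu$, with the quadratic form $\xi^T M \xi$ emerging from the $\hat V_i \ln p_t$ factors --- is exactly the paper's approach, and it is correct once one slip is fixed. The opening interpolation via $u(s)=P_s((P_{t-s}f)^2)$ is a red herring: bounding $\Gamma(P_tf)$ by $P_s\Gamma(P_{t-s}f)$ uniformly in $s$ amounts to the Driver--Melcher inequality $\Gamma(P_s h)\le C\,P_s\Gamma(h)$ applied to $h=P_{t-s}f$, and integrating gives the constant $C/2$ rather than $\Lambda$; since the optimal Driver--Melcher constant is unknown even on the Heisenberg group, this route cannot produce the sharp $\Lambda$. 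You in fact abandon it and argue directly, which is right. The paper streamlines the direct argument by first reducing to $g=0$, $t=1$ via left-invariance and dilation: at the identity $V_i=\hat V_i$, and since $\hat V_i$ commutes with the left-invariant semigroup one gets $V_iP_1f(0)=P_1(\hat V_i f)(0)=-\int_\G f\,\hat V_i(\ln p_1)\,p_1\,d\mu$ in one line, with no kernel-side differentiation at a generic point.

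There is a genuine error in your integral representation: it must read $f(gh)$, not $P_tf(gh)$. Differentiating $P_tf(g)=\int_\G f(gh)\,p_t(h)\,d\mu(h)$ in $g$ along $V_i$ turns into $-\hat V_i$ acting on $p_t$, but does not place another $P_t$ on $f$. With your formula the first Cauchy-Schwarz factor would be $P_t((P_tf)^2)(g)-2P_tf(g)\,P_{2t}f(g)+(P_tf(g))^2$, not the variance $P_tf^2(g)-(P_tf)^2(g)$; with the corrected $f(gh)$ the claimed identification holds. For sharpness, linear first-layer coordinate functions do not in general saturate the Cauchy-Schwarz step: equality forces $f - P_1f(0)$ to be proportional to $\sum_i\xi_i^\ast\,\hat V_i\ln p_1$, which is linear only in the Euclidean case. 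The correct extremizers are cutoffs of $\sum_i\xi_i^\ast\,\hat V_i\ln p_1$ with $\xi^\ast$ a top eigenvector of $M$, and Lemma~\ref{integrability} is precisely what controls the cutoff error. The paper leaves sharpness implicit, but this is the underlying argument.
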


\begin{proof}
By left invariance and scaling, it is enough to check it at the identity and $t=1$.
Let $f: \mathbb{G} \rightarrow \mathbb{R}$ be a smooth and compactly supported function, then
$$
\Gamma (P_1 f,P_1 f)(0)= \sup_{\sum_{i=1}^d a_i^2=1} \left(\sum_{i=1}^d a _i V_i P_1(f)(0)\right)^2.
$$
Now, let $a_i\in \R$ be such that $\sum_{i=1}^d a_i^2=1$, then
  \begin{eqnarray*}
  \sum_{i=1}^d a _i V_i P_1(f)(0)&=&
  P_{1}( \sum_{i=1}^d a_i \hat V_i f)(0)\\ 
                                                &=&  -\int_\G  \sum_{i=1}^d a_i \hat V_i (p_1) \, f d\mu\\
                                                 &=&  -\int_\G  \sum_{i=1}^d a_i \hat V_i (\ln p_1) \, f \, p_1 d\mu
  \end{eqnarray*}
 Therefore, by Cauchy-Schwarz inequality against the measure $p_1 d\mu$, we have
 $$
 \left(\sum_{i=1}^d a _i V_i P_1(f)(0)\right)^2 \leq   \int_\G \left(\sum_{i=1}^d a_i \hat V_i ( \ln p_1)\right)^2 p_1 d\mu  \,  \int_\G f^2 p_1 d\mu.
 $$
 But one can write:
 \beas
 \int_\G \left(\sum_{i=1}^d a_i \hat V_i ( \ln p_1)\right)^2 p_1 d\mu &=& \sum_{i,j =1}^d  a_i a_j \int_\G  \hat V_i ( \ln p_1) \hat V_j(\ln p_1) p_1 d\mu \\
                                                                      &=& A^t \, M \,  A
 \eeas
 with $M$  the matrix defined in the proposition and $A=(a_1, \dots a_d)^t$. The result follows then easily 

\end{proof}

The following proposition gives a lower and upper bound for the optimal constant $\Lambda$.

\begin{proposition}
We have
\[
\frac{Q}{2d}\le \Lambda \le \frac{Q}{2}.
\]
\end{proposition}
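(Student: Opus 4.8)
\emph{Plan.} The matrix $M$ is symmetric by construction and positive semi-definite: for $A=(a_1,\dots,a_d)^t\in\R^d$,
\[
A^tMA=\int_\G\Big(\sum_{i=1}^d a_i\,\hat V_i(\ln p_1)\Big)^2 p_1\,d\mu\ \ge\ 0 ,
\]
the integral being finite (by Cauchy--Schwarz it is dominated by $\int_\G\hat\Gamma(\ln p_1,\ln p_1)\,p_1\,d\mu$, which is finite; see Step~2). Hence the eigenvalues of $M$ lie in $[0,+\infty)$, and if we order them $0\le\lambda_1\le\cdots\le\lambda_d=\Lambda$ we get at once $\tfrac1d\,\mathrm{tr}(M)\le\Lambda\le\mathrm{tr}(M)$. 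So the entire statement reduces to the single identity $\mathrm{tr}(M)=\tfrac{Q}{2}$.

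To compute the trace, I would first write $\mathrm{tr}(M)=\sum_{i=1}^d\int_\G(\hat V_i\ln p_1)^2 p_1\,d\mu=\int_\G\hat\Gamma(\ln p_1,\ln p_1)\,p_1\,d\mu$, and then switch from $\hat\Gamma$ to $\Gamma$ via the inversion $\iota(g)=g^{-1}$. Since Carnot groups are nilpotent, hence unimodular, $\iota$ preserves the Haar measure $\mu$; moreover $p_1\circ\iota=p_1$ (symmetry of the heat kernel plus left invariance), and $\iota$ intertwines the left- and right-invariant horizontal fields so that $(\hat V_i\ln p_1)\circ\iota=-V_i\ln p_1$. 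The change of variables $g\mapsto g^{-1}$ therefore gives $\int_\G\hat\Gamma(\ln p_1,\ln p_1)p_1\,d\mu=\int_\G\Gamma(\ln p_1,\ln p_1)p_1\,d\mu$, which is finite by Lemma~\ref{integrability} (taking $f\equiv1$, $t=1$). Using $\Gamma(\ln p_1,\ln p_1)p_1=\Gamma(\ln p_1,p_1)$ and integrating by parts — legitimate thanks to the Gaussian bounds \eqref{bound1}--\eqref{bound2} and the integrability from Lemma~\ref{integrability} — we obtain
\[
\mathrm{tr}(M)=\int_\G\Gamma(\ln p_1,p_1)\,d\mu=-\int_\G\ln p_1\,\,Lp_1\,d\mu .
\]
(One could equally avoid the inversion and work directly with $\hat L$ and $\hat p_1=p_1$ from Lemma~\ref{hat-P_t}, using the right-invariant analogue $\big(t\hat L+D+\tfrac Q2\big)\hat p_t=0$ of the identity used below.)

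Finally I would insert the relation $\big(L+D+\tfrac{Q}{2}\big)p_1=0$, i.e. the $t=1$ case of the identity established in the proof of Lemma~\ref{integrability}, to rewrite the last integral as $\int_\G\ln p_1\,(D+\tfrac Q2)p_1\,d\mu$. For the dilation term, $D(p_1\ln p_1)=(Dp_1)(\ln p_1+1)$, together with $\int_\G D\phi\,d\mu=-\tfrac Q2\int_\G\phi\,d\mu$ (since $D^*=-D-\tfrac Q2$ and $D1=0$) and $\int_\G p_1\,d\mu=1$, yields $\int_\G(Dp_1)\ln p_1\,d\mu=-\tfrac Q2\int_\G p_1\ln p_1\,d\mu+\tfrac Q2$. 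Adding the remaining $\tfrac Q2\int_\G p_1\ln p_1\,d\mu$, the entropy-type integrals cancel and $\mathrm{tr}(M)=\tfrac Q2$, which proves both inequalities. The only substantial work is this trace computation; inside it, the two genuinely delicate points are justifying the integration by parts (controlled by the Gaussian estimates and Lemma~\ref{integrability}) and bookkeeping the $\int_\G p_1\ln p_1\,d\mu$ contributions so that they cancel — the rest is soft.
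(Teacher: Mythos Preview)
Your proposal is correct and follows essentially the same route as the paper: reduce to computing $\mathrm{tr}(M)$, pass from $\hat\Gamma$ to $\Gamma$, then use the identity $(L+D+\tfrac{Q}{2})p_1=0$ and an integration by parts (justified via Lemma~\ref{integrability}) to get $\mathrm{tr}(M)=\tfrac{Q}{2}$. Two minor remarks: you are right to record that $M$ is positive semi-definite, since this is what makes $\Lambda\le\mathrm{tr}(M)$ hold (the paper only says ``symmetric'' at this step); and the paper handles the dilation term a touch more directly by noting $(D+\tfrac{Q}{2})^*=-D$, so that $\int_\G \ln p_1\,(D+\tfrac{Q}{2})p_1\,d\mu=-\int_\G Dp_1\,d\mu=\tfrac{Q}{2}$, which avoids tracking the $\int p_1\ln p_1$ terms separately.
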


\begin{proof}
Since the matrix $M$ is symmetric, the constant $\Lambda$ satisfies the following inequality:
$$
\frac{1}{d} \textrm{ trace } M \leq \Lambda \leq \textrm{ trace } M
$$
and the trace of $M$ is given by  
$$
\textrm{ trace } M = \int_\G \hat \Ga (\ln p_1) p_1 d\mu = \int_\G \Ga (\ln p_1) p_1 d\mu.
$$
Thus, we just need to prove that
$$
\int_\G \Gamma(\ln p_t) p_t d\mu = \frac{Q}{2 t}.
$$
To this end, recall that 
  $$
  \left(tL+D+\frac{Q}{2}\right)p_t=0.
  $$

   Multiply then  by $\ln p_t$ and
  taking integral gives
  $$
  \int_\G \ln p_t \; \left(tL+D+\frac{Q}{2}\right)p_t\, d\mu = 0.
  $$
   But by using an integration by parts (which we can justify as in Lemma \ref{integrability}), we have
   $$
    t\int_\G \ln p_t L p_t\, d\mu = - t \int_\G \Gamma(\ln p_t, \ln p_t)\;  p_t\, d\mu.
   $$
   
    Moreover, we have
  $$
  \int_\G \ln p_t\; \left(D+\frac{Q}{2}\right)p_t \,d\mu %
  = -\int_\G p_t \; D\ln p_t \,d\mu %
  = -\int_\G D p_t d\mu= \frac{Q}{2}\int_\G p_t \, d\mu %
  =\frac{Q}{2}
  $$
  and therefore
  $$
  \int_\G \Ga(\ln p_t, \ln p_t)\;  p_t \,d\mu = \frac{Q}{2t}.
  $$  
\end{proof}

\subsection{The case of H-type groups}

We now prove that the lower bound is achieved in a special class of Carnot groups, the so-called $H$-type groups.  For us, a $H$-type group will be simply $\R^{2n+m} =\R^{2n}\times \R^{m} $ equipped with the product
$$
v*w=v+w+ \frac{1}{2}[v,w]
$$  
where $[\cdot,\cdot]$ is a bracket operation on $\R^{2n+m}$ whose  center is  ${0}\times   \R^{m}$ and such the map $J_z:\R^{2n} \to \R^{2n}$ defined for $z\in \R^{m}$  by:
$$
<J_z(x),y>= <[x,y],z> \textrm{ for all } x,y \in \R^{2n}
$$
is orthogonal when $|z|=1$. In the above we have identified $x\in \R^{2n}$ with $(x,0)\in  \R^{2n}\times \R^{m}$ and $z\in  \R^{m} $ with $(0,z) \in \R^{2n}\times \R^{m}$ and $|\cdot|$ denotes the classical Euclidean norm.

The left-invariant vector fields which coïncide with $\frac{\partial}{\partial x_i}$ at the identity write:
$$
V_i = \frac{\partial}{\partial x_i} +\frac{1}{2} \sum_{j=1}^m <J_{u_j} x, e_i> \frac{\partial}{\partial z_j} 
$$
whereas the corresponding right-invariant vector fields  write:
$$
\hat V_i = \frac{\partial}{\partial x_i} -\frac{1}{2} \sum_{j=1}^m <J_{u_j} x, e_i> \frac{\partial}{\partial z_j} 
$$
for $i=1,\dots,2n$ and where $(e_1,\dots e_{2n})$ is the canonical basis of $\R^{2n}$ and $(u_1,\dots u_{m})$ the canonical basis of $\R^{m}$.
It is easy to see that it is a Carnot group of step 2. The Haar measure is just the Lebesgue measure and the heat kernel associated to $L=\sum_{i=1}^{2n} V_i^2$ issued from the identity is given by (see for instance \cite{eldredge}): 

$$
p_t(x,z)=\frac{1}{(2\pi)^m}\frac{1}{(4\pi)^n} \int_{\R^m} e^{i<\lambda,z>} e^{-\frac{|\lambda| |x|^2}{4} \coth |\lambda| t }
           \left( \frac{|\lambda|}{\sinh |\lambda |t}\right)^n d\lambda
$$
and therefore is only a function of $|x|$ and $|z|$.

\

In the case of $H$-type groups, we  obtain the more precise statement.
\begin{corollary}\label{P-I-H-type}
Assume $\G$ to be an $H$-type group, then 
\[
\Lambda=\frac{Q}{2d}.
\]
\end{corollary}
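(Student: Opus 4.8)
The plan is to evaluate the quadratic form $A\mapsto A^tMA$ explicitly when $\G$ is an $H$-type group and to check that it is a fixed multiple of $|A|^2$, so that all eigenvalues of $M$ coincide. Recall from the proof of Proposition \ref{P-I-gen} that, writing $\hat V_A=\sum_{i=1}^{d}a_i\hat V_i$ for $A=(a_1,\dots,a_d)^t$, one has $A^tMA=\int_\G(\hat V_A\ln p_1)^2 p_1\,d\mu$, hence $\Lambda=\sup_{|A|=1}\int_\G(\hat V_A\ln p_1)^2 p_1\,d\mu$; and recall from the preceding proposition that $\int_\G\Gamma(\ln p_1)p_1\,d\mu=\frac Q2$. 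Since here $d=\dim\V_1=2n$, it suffices to prove the identity
\[
\int_\G(\hat V_A\ln p_1)^2\,p_1\,d\mu=\frac{|A|^2}{2n}\int_\G\Gamma(\ln p_1)\,p_1\,d\mu ,\qquad A\in\R^{2n},
\]
because taking $|A|=1$ then gives $\Lambda=\frac1{2n}\cdot\frac Q2=\frac Q{2d}$.

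Two structural facts of $H$-type groups drive the computation. First, the explicit formula for $p_t$ recalled above shows that $p_1(x,z)$ depends on $x$ only through $|x|$. Second, each $J_{u_j}$ is skew-symmetric, since $\langle J_z x,x\rangle=\langle[x,x],z\rangle=0$, and it is orthogonal for $|z|=1$, so $J_{u_j}^2=-I$; expanding $J_{u_j+u_l}^2=-2I$ then yields $J_{u_j}J_{u_l}+J_{u_l}J_{u_j}=0$ for $j\neq l$, so that $J_{u_j}J_{u_l}$ is itself skew-symmetric when $j\neq l$. Writing $\hat V_A=\partial_{x,A}-\frac12\sum_j\langle J_{u_j}x,A\rangle\partial_{z_j}$ with $\partial_{x,A}=\sum_i a_i\partial_{x_i}$, and using $\partial_{x,A}\ln p_1=\frac{\langle x,A\rangle}{|x|}\,\partial_r\ln p_1$, I would expand $(\hat V_A\ln p_1)^2$ into a radial square, a radial--$z$ cross term, and a $z$-derivative square, and then, for each fixed $z$, integrate over $x\in\R^{2n}$ using $\int_{\R^{2n}}x_ix_j\,\phi(|x|)\,dx=\frac{\delta_{ij}}{2n}\int_{\R^{2n}}|x|^2\phi(|x|)\,dx$. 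The cross term integrates to a multiple of $\langle J_{u_j}A,A\rangle=0$ and drops out; in the $z$-derivative square the coefficient of $\partial_{z_j}\ln p_1\,\partial_{z_l}\ln p_1$ integrates to a multiple of $\langle J_{u_j}A,J_{u_l}A\rangle$, equal to $|A|^2$ when $j=l$ (orthogonality) and to $0$ when $j\neq l$ (as $\langle J_{u_j}A,J_{u_l}A\rangle=-\langle A,J_{u_j}J_{u_l}A\rangle$ and $J_{u_j}J_{u_l}$ is skew). Collecting the spherical averages, the $x$-integrand becomes $\frac{|A|^2}{2n}\bigl((\partial_r\ln p_1)^2+\frac{|x|^2}{4}\sum_j(\partial_{z_j}\ln p_1)^2\bigr)$; and the same two facts show $\Gamma(\ln p_1)=\sum_i(V_i\ln p_1)^2=(\partial_r\ln p_1)^2+\frac{|x|^2}{4}\sum_j(\partial_{z_j}\ln p_1)^2$ (the cross terms cancel because $\langle x,J_{u_j}x\rangle=0$, the diagonal ones use $|J_{u_j}x|=|x|$). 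This gives the displayed identity and hence $\Lambda=\frac Q{2d}$.

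The one point requiring care is the legitimacy of interchanging the $x$- and $z$-integrations and of expanding the square term by term, since the cross term is not of one sign. This is done exactly as in Lemma \ref{integrability}: the Gaussian bounds \eqref{bound1} and \eqref{bound2}, together with the finiteness of $\int_\G\Gamma(\ln p_1)p_1\,d\mu=\int_\G\hat\Gamma(\ln p_1)p_1\,d\mu=\frac Q2$, provide enough integrability to truncate with the cutoffs $h_n$ and pass to the limit, after which the elementary spherical averages apply slice by slice. I expect this to be the only real obstacle; the remainder is routine linear algebra.
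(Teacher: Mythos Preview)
Your argument is correct and follows the same overall plan as the paper: show that $M$ is a scalar matrix by expanding $\hat V_i\ln p_1$ into a radial $x$-derivative part and a $z$-derivative part, then use spherical averaging in $x$. The one genuine difference is how the mixed and off-diagonal $z$-terms are eliminated. The paper uses that $p_1$ depends on $z$ only through $|z|$: the cross term carries a single factor $z_l$ and the off-diagonal $z$-$z$ term a factor $z_lz_p$ with $l\neq p$, and both vanish after $z$-integration by oddness; equality of the diagonal contributions across $l$ also comes from $|z|$-radiality. You instead integrate in $x$ first and exploit the Clifford relations of the maps $J_{u_j}$: after the spherical average the cross term is proportional to $\langle J_{u_j}A,A\rangle=0$ (skewness), and the off-diagonal $z$-$z$ term to $\langle J_{u_j}A,J_{u_l}A\rangle=0$ for $j\neq l$ (anticommutation), so only $|x|$-radiality of $p_1$ is needed. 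Both routes are short; yours trades one symmetry of the heat kernel for the algebraic identity that characterises $H$-type groups. Your integrability worry is milder than you suggest: since $(\hat V_A\ln p_1)^2\le |A|^2\,\hat\Gamma(\ln p_1)$ and $\int_\G\hat\Gamma(\ln p_1)\,p_1\,d\mu=\frac{Q}{2}$, the cross term is absolutely integrable by the arithmetic--geometric mean, and Fubini applies directly without the cutoffs of Lemma~\ref{integrability}.
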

  
  \begin{proof}
Let $\G$ be a $H$-type group.
By the previous proposition, the only thing that we need to do is to see that, in this case, the matrix $M$ writes $\lambda Id$ for some $\lambda \in \R$. 
This will come from  the fact that the heat kernel is a radial function, this means that it only depends on $|x|$ and $|z|$.
Recall also that a $H$-type group can be identified with $\R^{2n+m}$ and that the vector fields $V_i$, $i=1, \dots, 2n$ read:
$$
\hat V_i = \frac{\partial}{\partial x_i} -\frac{1}{2} \sum_{j=1}^m <J_{u_j} x, e_i> \frac{\partial}{\partial z_j} 
$$
where $J_{u_j}$ is an orthogonal map of $\R^{2n}$.
Therefore the coefficient $M_{ij} $ of the matrix $M$ can be written as:
$$
\int_{x\in \R^{2n}} \int_{z\in \R^m} \hat V_i f(|x|^2, |z|^2) \hat V_j f(|x|^2, |z|^2) g(|x|^2, |z|^2) dx dz
$$
for some functions $f$ and $g$.
Now,
$$
V_i f(|x|^2, |z|^2) = 2 x_i \partial_1 f - \sum_{l=1}^m z_l <J_{u_l} x, e_i> \partial_2 f
$$
and the result is coming by expanding the product and noticing that:

$$
\int_{x\in \R^{2n}} x_i x_j h_1(|x|,|z|) dx = \delta_{ij} \tilde h_1(|z|),
$$

$$
\int_{z\in \R^m} z_j h_2(|x|,|z|)dz =0,
$$

$$
\int_{z\in \R^m} z_l z_p h_3(x) =0 \textrm{ for } l\neq p,$$

$$
\int_{x\in \R^{2n}} z_l^2 <J_{u_l} x, e_i> <J_{u_l} x, e_j> h_4(|x|,|z|) dx =  \delta_{ij} z_l^2 \tilde h_4(|z|)
$$
since $J_{u_l} x$ is an orthogonal map of $\R^{2n}$ for $l=1, \dots, m$ 
and that
$
\int_{z\in \R^m} z_l^2 \tilde h_4(|z|) dz 
$ 
does not depend on $l$.

Note that the estimates in \cite{eldredge} show that all the integrals appearing in the above argument are well defined. 
\end{proof}

\section{Isoperimetric inequality}

In this section, we show that the reverse Poincar\'e inequality we proved in the previous section can be used to prove the isoperimetric inequality in Carnot groups.  To this end, we adapt some beautiful
ideas of Varopoulos (see \cite{Varopoulos2}, pp.256-258) and Ledoux
(see pp. 22 in \cite{ledoux-bourbaki}, see also Theorem 8.4 in
\cite{ledoux-stflour})

In order to state the inequality we need the notion of horizontal perimeter, which is defined from the horizontal variation of a function.

 Following \cite{CDG}, given a function $f\in L^1_{loc}(\G)$ we define the horizontal total variation of $f$ by
\[
\text{Var}_{\Ho}(f) = \underset{\phi\in \F(\G)}{\sup} \int_\G
f \left(\sum_{i=1}^d V_i \phi_i\right) d\mu.
\]
where
\[
\mathcal F(\G) = \{\phi\in C^1_0(\G,\mathcal H)\mid
||\phi||_\infty \le 1\}.
\]
Here, for $\phi = \sum_{i=1}^d \phi_i V_i$, we have let
$||\phi||_\infty = \underset{\G}{\sup} \sqrt{\sum_{i=1}^d
\phi_i^2}$. 

The space \[ BV_\Ho(\G) = \{f\in L^1(\G)\mid
\text{Var}_\Ho(f)<\infty\},
\]
endowed with the norm
\[
||f||_{BV_\Ho(\G)} = ||f||_{L^1(\G)} + \text{Var}_\Ho(f),
\]
is a Banach space.  One can note that when $f\in
W^{1,1}_\Ho(\G)$, then $f\in BV_\Ho(\G)$, and one has in fact
\[
\text{Var}_\Ho(f) = ||\sqrt{\Gamma(f)}||_{L^1(\G)}.
\]
Given a measurable set $E\subset \G$ we say that it has finite
horizontal perimeter  if $\mathbf 1_E\in BV_\Ho(\G)$. In
such case we define the horizontal perimeter of $E$  by
\[
P_\Ho(E) = \text{Var}_\Ho(\mathbf 1_E).
\]
We say that a measurable set $E\subset \G$ is a Caccioppoli set if
$P_\mathcal{H}(E)<\infty$. 

\

 We now prove the following result which is due to Varopoulos.

\begin{theorem}[Isoperimetric inequality]\label{T:iso} 
There is a constant $C_{\emph{iso}} >0$, such that for every bounded
Caccioppoli set $E\subset \G$
\[
\mu(E)^{\frac{Q-1}{Q}} \le C_{\emph{iso}} P_\Ho(E).
\]
\end{theorem}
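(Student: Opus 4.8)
The plan is to run the heat-semigroup argument of Varopoulos and Ledoux, invoking the reverse Poincar\'e inequality of Proposition~\ref{P-I-gen} exactly at the point where a regularization estimate is needed. We may assume $\mu(E)>0$ and $P_\Ho(E)<+\infty$, and we set $f=\mathbf 1_E$; since $E$ is bounded, $f\in L^1(\G)\cap L^2(\G)$. For every $t>0$ we split
\[
\mu(E)=\int_\G f^2\,d\mu=\int_\G f\,(f-P_tf)\,d\mu+\int_\G f\,P_tf\,d\mu ,
\]
and we shall bound the first term (the ``perimeter term'') by a constant times $\sqrt t\,P_\Ho(E)$, the second (the ``smoothing term'') by a constant times $t^{-Q/2}\mu(E)^2$, and then optimize over $t$.

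The smoothing term is easy: $\int_\G f\,P_tf\,d\mu=\N{P_{t/2}f}_{L^2(\G)}^2$, and the Gaussian upper bound \eqref{bound1} together with the homogeneity $\mu(B(0,r))=\mu(B(0,1))\,r^Q$ give $\N{p_{t/2}}_{L^2(\G)}\le C\,t^{-Q/4}$; hence by Young's convolution inequality $\N{P_{t/2}f}_{L^2(\G)}\le C\,t^{-Q/4}\N{f}_{L^1(\G)}=C\,t^{-Q/4}\mu(E)$, so that $\int_\G f\,P_tf\,d\mu\le C^2\,t^{-Q/2}\mu(E)^2$.

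For the perimeter term, since $P_t\mathbf 1_E\le 1$ we have $\int_\G f\,(f-P_tf)\,d\mu=\int_E(1-P_t\mathbf 1_E)\,d\mu\le \N{f-P_tf}_{L^1(\G)}$, and letting $\varepsilon\downarrow 0$ in $P_\varepsilon f-P_tf=-\int_\varepsilon^{t}LP_sf\,ds$ gives $\N{f-P_tf}_{L^1(\G)}\le\int_0^{t}\N{LP_sf}_{L^1(\G)}\,ds$. The heart of the proof is the estimate $\N{LP_sf}_{L^1(\G)}\le\sqrt{\Lambda/s}\;P_\Ho(E)$. To prove it, test $LP_sf$ against $\phi\in C_0^\infty(\G)$ with $\N{\phi}_\infty\le1$: using the self-adjointness of $L$ and $P_s$ and the commutation $LP_s=P_sL$,
\[
\int_\G (LP_sf)\,\phi\,d\mu=\int_\G f\,(LP_s\phi)\,d\mu=\int_E\sum_{i=1}^{d}V_i\!\left(V_iP_s\phi\right)d\mu .
\]
The horizontal vector field $\psi=\sum_{i=1}^{d}\left(V_iP_s\phi\right)V_i$ has pointwise horizontal norm $\sqrt{\Ga(P_s\phi)}$, and the reverse Poincar\'e inequality of Proposition~\ref{P-I-gen}, applied to the smooth compactly supported function $P_s\phi$, yields
\[
\Ga(P_s\phi)\le\frac{\Lambda}{s}\left(P_s\phi^2-(P_s\phi)^2\right)\le\frac{\Lambda}{s}\,P_s(\phi^2)\le\frac{\Lambda}{s}\,\N{\phi}_\infty^2\le\frac{\Lambda}{s} ,
\]
so $\N{\psi}_\infty\le\sqrt{\Lambda/s}$. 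Since $E$ is bounded, choose $\chi_n\in C_0^\infty(\G)$ with $0\le\chi_n\le1$ and $\chi_n\equiv1$ on $B(0,n)$; then $\sqrt{s/\Lambda}\,\chi_n\psi\in\F(\G)$, and for $n$ large enough that $E\subset B(0,n)$ one has $\int_\G f\sum_{i}V_i(\chi_n\psi_i)\,d\mu=\int_E\sum_iV_i(V_iP_s\phi)\,d\mu$, because $\chi_n\equiv1$ and $V_i\chi_n\equiv0$ on $E$. By the definition of $\text{Var}_\Ho$ this quantity is $\le\sqrt{\Lambda/s}\,P_\Ho(E)$; taking the supremum over such $\phi$ gives the estimate, and therefore $\N{f-P_tf}_{L^1(\G)}\le 2\sqrt{\Lambda t}\,P_\Ho(E)$.

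Combining the two bounds,
\[
\mu(E)\le 2\sqrt{\Lambda t}\,P_\Ho(E)+C^2\,t^{-Q/2}\mu(E)^2 ,\qquad t>0 ,
\]
and choosing $t$ so that $C^2\,t^{-Q/2}\mu(E)^2=\tfrac12\mu(E)$, i.e. $t=(2C^2\mu(E))^{2/Q}$, absorbs the smoothing term into the left-hand side and leaves $\tfrac12\mu(E)\le 2\sqrt{\Lambda}\,(2C^2)^{1/Q}\,\mu(E)^{1/Q}\,P_\Ho(E)$, that is $\mu(E)^{\frac{Q-1}{Q}}\le C_{\text{iso}}\,P_\Ho(E)$ with $C_{\text{iso}}=4\sqrt{\Lambda}\,(2C^2)^{1/Q}$. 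The step demanding the most care is the key estimate above: one has to check that $LP_sf\in L^1(\G)$ (which follows from $\N{Lp_s}_{L^1(\G)}\le C/s$ and the boundedness of $E$), justify the integration by parts $\int_\G(LP_sf)\phi\,d\mu=\int_\G f\,LP_s\phi\,d\mu$, and legitimize the cutoff step, since $P_s\phi$ itself is not compactly supported --- for all of which the Gaussian bounds \eqref{bound1}--\eqref{bound2} and a truncation as in the proof of Lemma~\ref{integrability} are the relevant tools.
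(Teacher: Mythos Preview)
Your argument is correct and follows the same Varopoulos--Ledoux heat-semigroup strategy as the paper: split $\mu(E)$ into a perimeter term and a smoothing term, use the reverse Poincar\'e inequality of Proposition~\ref{P-I-gen} to obtain the $L^\infty$ bound $\sqrt{\Gamma(P_s\phi)}\le\sqrt{\Lambda/s}\,\|\phi\|_\infty$, and optimize in $t$. The only notable variation is in how the perimeter term is handled: the paper first proves $\|P_tf-f\|_{L^1(\G)}\le 2\sqrt{\Lambda t}\,\|\sqrt{\Gamma(f)}\|_{L^1(\G)}$ for smooth compactly supported $f$ (dualizing against $g$ and writing $\int g(f-P_tf)=-\int_0^t\int\Gamma(P_sg,f)$) and then passes to $\mathbf 1_E$ via the smooth approximation theorem of Garofalo--Nhieu, whereas you work directly with $f=\mathbf 1_E$, bound $\|LP_s\mathbf 1_E\|_{L^1}$ by duality, and invoke the definition of $P_\Ho(E)$ after a cutoff; your route avoids the external approximation result at the price of justifying $LP_s\mathbf 1_E\in L^1$. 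One small slip: in the key step you write ``applied to the smooth compactly supported function $P_s\phi$'', but $P_s\phi$ is not compactly supported; Proposition~\ref{P-I-gen} is being applied with the compactly supported function $\phi$, and the inequality you then write down is the correct conclusion.
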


\begin{proof}
Let $f \in C_0^\infty(\G)$. From Proposition \ref{P-I-gen}, we have
\begin{align*}
\Gamma (P_t f)  \le \frac{\Lambda }{t}  \|
f\|^2_{L^\infty(\G)},\ \ \ \ t>0.
\end{align*}
Thus,
\[
\| \sqrt{\Gamma (P_t f)} \|_{L^\infty(\G)} \le
\sqrt{\frac{\Lambda}{t} }\| f\|_{L^\infty(\G)}.
\]
Applying this inequality to $g \in C_0^\infty(\G)$, with $g\ge 0$
and $||g||_{L^\infty(\G)}\le 1$, if $f \in C_0^1(\G)$ we have
\begin{align*}
\int_\G g(f-P_tf) d\mu & = \int_0^t \int_\G g \frac{\p P_sf}{\p s}
d\mu ds = \int_0^t \int_\G g L P_sf d\mu ds  = - \int_0^t \int_\G \Gamma(P_sg,f) d\mu ds
\\
& \le \int_0^t  \| \sqrt{\Gamma(P_sg)} \|_{L^\infty(\G)}\int_\G
\sqrt{\Gamma(f)} d\mu ds  \le 2 \ \sqrt{ \Lambda t}
\int_\G\sqrt{\Gamma(f)} d\mu.
\end{align*}
We thus obtain the following basic inequality: for $f \in
C_0^1(\G)$,
\begin{align}\label{PoincareP_t}
\|P_tf - f\|_{L^1(\G)} \le 2 \ \sqrt{\Lambda t}\ \|
\sqrt{\Gamma(f)} \|_{L^1(G)},\ \ \ t>0.
\end{align}
Suppose now that $E\subset \G$ is a bounded Caccioppoli set. Therefore $\mathbf 1_E\in
BV_\Ho(\G)$. By Theorem 1.14 in \cite{GN}. there exists a sequence
$\{f_n\}_{n\in \mathbb N}$ in $C^\infty_0(\G)$ satisfying 
\begin{itemize}
\item[(i)] $||f_n - 1_E||_{L^1(\G)} \to 0$;
\item[(ii)] $\int_\G \sqrt{\Gamma(f_n)} d\mu \to
\text{Var}_\Ho(f)$.
\end{itemize}

Applying \eqref{PoincareP_t} to $f_n$ we obtain \[ \|P_tf_n -
f_n\|_{L^1(\G)} \le \ 2 \ \sqrt{\Lambda t}\ \| \sqrt{\Gamma(f_n)}
\|_{L^1(\G)} = 2 \ \sqrt{\Lambda t}\ Var_\Ho(f_n),\ \ \ n\in
\mathbb N.
\]
Letting $n\to \infty$ in this inequality, we conclude
\[ \|P_t \mathbf 1_E -
\mathbf 1_E\|_{L^1(\G)} \le   2 \ \sqrt{\Lambda t} \
Var_\Ho(\mathbf 1_E) =  2 \ \sqrt{\Lambda t} \ P_\Ho(E),\ \
\ \ t>0.
\]
Observe now that we have
\[
||P_t \mathbf 1_E - \mathbf 1_E||_{L^1(\G)}  = 2\left(\mu(E) -
\int_E P_t \mathbf 1_E d\mu\right).
\]
On the other hand, we have
\[
\int_E  P_t \mathbf 1_E d\mu  = \int_\bM \left(P_{t/2}\mathbf
1_E\right)^2 d\mu.
\]
We thus obtain
\[
||P_t \mathbf 1_E - \mathbf 1_E||_{L^1(\G)} = 2 \left(\mu(E) -
\int_\G \left(P_{t/2}\mathbf 1_E\right)^2 d\mu\right).
\]
We now note that 
\begin{align*}
\int_\G (P_{t/2} \mathbf 1_E)^2 d\mu & \le \left(\int_E
\left(\int_\G p_{t/2}(x,y)^2
d\mu(y)\right)^{\frac{1}{2}}d\mu(x)\right)^2
\\
& = \left(\int_E p_t(x,x)^{\frac{1}{2}}d\mu(x)\right)^2 \le
\frac{p_1(0)}{t^{Q/2}} \mu(E)^2.
\end{align*}
Combining these equations yields
\[
\mu(E)  \le   \ \sqrt{\Lambda t} \ P_\Ho(E) +
\frac{p_1(0)}{t^{Q/2}} \mu(E)^2,\ \ \ \ t>0.
\]
Minimizing in $t$, we conclude
\[
\mu(E)^{\frac{Q-1}{Q}} \le C P_\Ho(E),
\]
with 
\[ 
C = (1+Q)^{\frac{Q+1}{Q}}  p_1(0)^{\frac{1}{Q}} \frac{ \Lambda }{Q} . \] 
\end{proof}

\section{Boundedness of the Riesz transform}

Besides the isoperimetric inequality, the reverse Poincar\'e for the heat semigroup is also intimately related to the boundedness of the Riesz transform. Actually combining Proposition \ref{P-I-gen} with the results in \cite{BG} which built on \cite{ACDH} leads to the following result.

\begin{proposition}\label{T:equivalence}
Let $1<p<\infty$. There exist constants $A_p, B_p>0$ such that 
\begin{equation}\label{RZsr}
A_p \| (-L)^{1/2} f \|_p \le \| \sqrt{\Gamma(f)} \|_p \le B_p \| (-L)^{1/2} f \|_p,\ \ \ \ \ f \in C_0^\infty(\G),
\end{equation}
\end{proposition}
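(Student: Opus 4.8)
The plan is to deduce \eqref{RZsr} from the general theory of Riesz transforms on doubling metric measure spaces developed in \cite{ACDH} and refined in \cite{BG}, feeding in Proposition \ref{P-I-gen} as the single non-generic hypothesis. Recall that $(\G,d,\mu)$ --- the Carnot group with its Carnot--Carath\'eodory distance and Haar measure --- is a complete doubling metric measure space, and that by \eqref{bound1} the heat kernel of $L$ satisfies a Gaussian upper bound; these are precisely the standing assumptions of \cite{ACDH, BG}. Within that framework, the direct Riesz transform $\sqrt{\Gamma}\,(-L)^{-1/2}$ is already bounded on $L^p$ for $1<p\le2$ under doubling and the Gaussian upper bound alone, and the only additional input needed to push the range to $2<p<\infty$ is an $L^p$ gradient estimate on the semigroup, $\big\|\sqrt{t}\,\sqrt{\Gamma(P_t h)}\big\|_{p\to p}\le C$. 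So the first step is to read off such an estimate from the reverse Poincar\'e inequality.

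This is immediate. By Proposition \ref{P-I-gen}, for $h\in C_0^\infty(\G)$ and $t>0$,
\[
\Gamma(P_t h) \le \frac{\Lambda}{t}\big(P_t h^2-(P_t h)^2\big) \le \frac{\Lambda}{t}\,P_t(h^2),
\]
hence $\sqrt{\Gamma(P_t h)}\le\sqrt{\Lambda/t}\,\big(P_t(h^2)\big)^{1/2}$ pointwise, and since $P_t$ is a contraction on $L^{q}(\G,\mu)$ for every $q\ge1$,
\[
\big\|\sqrt{\Gamma(P_t h)}\big\|_p \le \sqrt{\Lambda/t}\,\big\|P_t(h^2)\big\|_{p/2}^{1/2} \le \sqrt{\Lambda/t}\,\|h\|_p ,\qquad 2\le p\le\infty .
\]
By density this yields the gradient estimate $\big\|\sqrt{t}\,\sqrt{\Gamma(P_t\,\cdot\,)}\big\|_{p\to p}\le\sqrt{\Lambda}$ for every $p\in[2,\infty]$, which is exactly the hypothesis $(G_q)$ of \cite{ACDH} (even with $q=\infty$). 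Inserting it into the results of \cite{BG} --- which build on \cite{ACDH} --- gives the right-hand inequality of \eqref{RZsr}: for each $1<p<\infty$ there is $B_p>0$ with $\|\sqrt{\Gamma(f)}\|_p\le B_p\|(-L)^{1/2}f\|_p$, $f\in C_0^\infty(\G)$.

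For the left-hand inequality I would argue by duality. Fix $1<p<\infty$ and let $f,g\in C_0^\infty(\G)$; write $h=(-L)^{-1/2}g$, so that $(-L)^{1/2}h=g$. By self-adjointness of $L$ and of $(-L)^{1/2}$,
\[
\int_\G \big((-L)^{1/2}f\big)\, g\, d\mu = \int_\G f\,(-L)h\, d\mu = \int_\G \Gamma(f,h)\, d\mu ,
\]
and since $|\Gamma(f,h)|\le \sqrt{\Gamma(f)}\,\sqrt{\Gamma(h)}$ pointwise, H\"older's inequality gives
\[
\Big|\int_\G \big((-L)^{1/2}f\big)\, g\, d\mu\Big| \le \big\|\sqrt{\Gamma(f)}\big\|_p\,\big\|\sqrt{\Gamma(h)}\big\|_{p'} \le B_{p'}\,\big\|\sqrt{\Gamma(f)}\big\|_p\,\|g\|_{p'},
\]
the last step being the right-hand inequality of \eqref{RZsr} at the exponent $p'$ applied to $h=(-L)^{-1/2}g$. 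Taking the supremum over $g$ with $\|g\|_{p'}\le1$ yields $\|(-L)^{1/2}f\|_p\le B_{p'}\|\sqrt{\Gamma(f)}\|_p$, i.e.\ the left-hand inequality of \eqref{RZsr} with $A_p=B_{p'}^{-1}$. Since $p\mapsto p'$ is a bijection of $(1,\infty)$, having the direct estimate on the whole range is exactly what makes this duality argument close for every $1<p<\infty$. (The formal manipulations with the unbounded operator $(-L)^{-1/2}$ are justified by the standard density arguments of \cite{ACDH}.)

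The genuine difficulty is all contained in \cite{ACDH, BG}: deriving the $L^p$ boundedness of the Riesz transform for $p>2$ from the gradient estimate requires a Calder\'on--Zygmund / good-$\lambda$ decomposition adapted to the doubling setting, together with Davies--Gaffney type off-diagonal estimates for $P_t$ and $\sqrt{t}\,\sqrt{\Gamma}\,P_t$. What has to be checked on our side is only that the sub-Laplacian on a Carnot group fits that abstract framework --- a self-adjoint Markov diffusion generator with carr\'e du champ operator $\Gamma$ on a complete doubling metric measure space --- and that Proposition \ref{P-I-gen} supplies the gradient hypothesis in the precise $L^p$ form those references demand; both are routine given the preliminaries recalled above.
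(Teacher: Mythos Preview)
Your proof is correct and follows essentially the same route as the paper: feed the reverse Poincar\'e inequality from Proposition~\ref{P-I-gen} into the machinery of \cite{ACDH,BG} as the gradient estimate $\|\sqrt{\Gamma}\,P_t\|_{\infty\to\infty}\le C/\sqrt{t}$ (you go slightly further and record the $L^p\to L^p$ version for $p\ge 2$), and then invoke those references for both inequalities in \eqref{RZsr}. The paper's own proof is terser, leaving the duality argument for the lower bound implicit in the citation to \cite{ACDH,BG}, whereas you spell it out; but the substance is the same.
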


\begin{proof}
The main ingredient is Theorem 1.3 in \cite{ACDH}. The theorem is stated for Riemannian manifolds, but it is checked in \cite{BG} that the arguments go through in the context of Carnot-Carath\'eodory spaces that satisfy the doubling condition and the Poincar\'e inequality. The only thing to check is the bound
\[
\| \sqrt{\Gamma} e^{tL} \|_{\infty \to \infty} \le \frac{C}{\sqrt{t}},
\]
where $e^{tL}$ is the heat semigroup generated by $L$. This bound is a consequence of our Proposition \ref{P-I-gen}.
\end{proof}

We note that the first proof of the boundedness of the Riesz transform in Carnot groups can be found in \cite{LV}.


\begin{thebibliography}{99}

\bibitem{ACDH}
P. Auscher, T. Coulhon,  X.T. Duong, S. Hofmann, 
\emph{Riesz transform on manifolds and heat kernel regularity}, 
Ann. Sci. \'Ecole Norm. Sup. (4) \textbf{37}~(2004), no. 6, 911-957. 


\bibitem{BBBC} D. Bakry, F. Baudoin, M. Bonnefont, D. Chafai: \emph{On gradient bounds for the heat kernel on the Heisenberg group}. 
J. Funct. Anal. 255 (2008), no. 8, 1905--1938. 

\bibitem{BGL} D. Bakry, I. Gentil \& M. Ledoux: \emph{Analysis and Geometry of Markov Diffusion operators},  Grundlehren der Mathematischen Wissenschaften [Fundamental Principles of Mathematical Sciences], 348. Springer, Cham, 2014. xx+552 pp

\bibitem{BB} F. Baudoin, M. Bonnefont, \emph{Log-Sobolev inequalities for subelliptic operators satisfying a generalized curvature dimension inequality}, Journal of Functional Analysis, Volume 262, Issue 6, 2646-2676, 2012


\bibitem{BG}  F. Baudoin \& N. Garofalo:  \emph{A note on boundedness of Riesz transform for some subelliptic operators},  International Mathematics Research Notices, (2013), no 2, 398-421 .

\bibitem{CDG}
L. Capogna, D. Danielli \& N. Garofalo, \emph{The geometric Sobolev
embedding for vector fields and the isoperimetric inequality}, Comm.
Anal. and Geom., \textbf{2}~(1994), 201-215.


\bibitem{Driver-Melcher} B. Driver \& T. Melcher: \emph{Hypoelliptic heat kernel inequalities on the Heisenberg group.} 
J. Funct. Anal. 221 (2005), no. 2, 340--365. 

\bibitem{eldredge} N. Eldredge, \emph{Precise estimates for the subelliptic heat kernel on H-type groups.} Journal de Math\'ematiques Pures et Appliqu\'ees 92 (2009), pp. 52-85.

\bibitem{GN}
N. Garofalo \& D.M. Nhieu, \emph{Lipschitz continuity, global smooth
approximations and extension theorems for Sobolev functions in
Carnot-Carath\'eodory spaces}, J. Anal. Math., \textbf{74}~(1998),
67-97.

\bibitem{ledoux-bourbaki} M. Ledoux, \emph{In\'egalit\'es isop\'erim\'etriques en analyse et probabilit\'es}
S\'eminaire Bourbaki. Ast\'erisque 216, 343-375 (1993).

\bibitem{ledoux-stflour} M. Ledoux, \emph{Isoperimetry and Gaussian analysis}
Ecole d'\'et\'e de Probabilit\'es de St-Flour 1994. Lecture Notes in Math. 1648, 165-294. Springer (1996).


\bibitem{LV} Lohou\'e, N., Varopoulos, N.: \emph{Remarques sur les transform\'ees de Riesz sur les
groupes nilpotents}. C.R.A.S. Paris, 301, 11 (1985) 559-560.


\bibitem{Varopoulos2} N. Varopoulos. \emph{Small time Gaussian estimates of heat diffusion kernels.
Part I: The semigroup technique}. Bull. Sc. math. 113, 253-277 (1989).

\bibitem{varopoulos} N. Varopoulos, L. Saloff-Coste \&  T. Coulhon, 
\emph{Analysis and geometry on groups. }
Cambridge Tracts in Mathematics, 100. Cambridge University Press, Cambridge, 1992. xii+156 pp. ISBN: 0-521-35382-3 


\end{thebibliography}
\end{document}